\documentclass{proc-l}
\usepackage{tikz}
\usetikzlibrary{positioning,automata,calc,arrows}
\usepackage{tikz-cd}
\newtheorem{theorem}{Theorem}[section]
\newtheorem{lemma}[theorem]{Lemma}
\theoremstyle{definition}
\newtheorem{definition}[theorem]{Definition}
\newtheorem{example}[theorem]{Example}

\theoremstyle{remark}
\newtheorem{remark}[theorem]{Remark}
\newtheorem{corollary}[theorem]{Corollary}
\numberwithin{equation}{section}



    
    
    
    
    
    \begin{document}
    \title{When do the Kahn-Kalai Bounds provide nontrivial information?}
    
    \author{Bryce Alan Christopherson}
    \address{Department of Mathematics, University of North Dakota, Grand Forks, ND 58202}
    \email{bryce.christopherson@UND.edu}
    
    \author{Jack Baretz}
    \address{Department of Mathematics, University of North Dakota, Grand Forks, ND 58202}
    \email{jack.baretz@UND.edu}
    
    \subjclass[2000]{Primary 06A07, 05C80; Secondary 60C05, 68R01}

    \thanks{\phantom{a}\\Conflict of Interest Statement: The authors declare none. \\Funding Statement: This work received no specific grant from any funding agency, commercial or not-for-profit sectors.  \\Data Availability Statement: Data sharing not applicable to this article as no datasets were generated or analysed during the current study.}
    
    \keywords{Thresholds, Kahn-Kalai, Park-Pham}
    
    \begin{abstract}
        The Park-Pham theorem (previously known as the Kahn-Kalai conjecture), bounds the critical probability, $p_c(\mathcal{F})$, of the a non-trivial property $\mathcal{F}\subseteq 2^X$ that is closed under supersets by the product of a universal constant $K$, the expectation threshold of the property, $q(\mathcal{F})$, and the logarithm of the size of the property's largest minimal element, $\log\ell(\mathcal{F})$.  That is, the Park-Pham theorem asserts that $p_c(\mathcal{F})\leq Kq(\mathcal{F})\log\ell(\mathcal{F})$.  Since the critical probability $p_c(\mathcal{F})$ always satisfies $p_c(\mathcal{F})<1$, one may ask when the upper bound posed by Kahn and Kalai gives us more information than this--that is, when is it true that $Kq(\mathcal{F})\log\ell(\mathcal{F}) < 1$?  In this short note, we provide a number of necessary conditions for this to happen and give a few sufficient conditions for the bounds to provide new (and, in fact, asymptotically perfect) information along the way.  In the most interesting case where $\ell(\mathcal{F}_n)\rightarrow \infty$, we prove the following relatively strong necessary condition for the Kahn-Kalai bounds to provide nontrivial information: For every positive integer $t$, every collection of all-but-$t$ of the minimal elements of $\mathcal{F}_n$ may have nonempty intersection for only finitely many $n$.  Consequently, not only must the number of minimal elements become arbitrarily large, but so too must the size of any cover.  Intuitively, this means that such sequences $\mathcal{F}_n$ must occupy an ever-widening `wedge' in $2^{X_n}$:  the further $\mathcal{F}_n$ climbs up $2^{X_n}$ in one area, the further it must spread down and across $2^{X_n}$ in another. 
    \end{abstract}

    \maketitle

\section{Introduction}
    We will say that the Kahn-Kalai bounds \textit{provide nontrivial information} for the sequence of nontrivial upper sets $\mathcal{F}_n \subseteq 2^{X_n}$ if there exists $N$ such that for all $n\geq N$, we have $Kq(\mathcal{F}_n)\log\ell(\mathcal{F}_n) < 1$.  While nice to know in their own right (if only for the obvious reason that the constant function $1$ is computationally less expensive to calculate or approximate than $Kq(\mathcal{F}_n)\log\ell(\mathcal{F}_n)$ is), both items are also necessary in extensions of the Kahn-Kalai bounds to $p$-biased measures on upper sets in generic partially ordered sets \cite{christopherson2024conditional} and allow one to check whether certain stronger versions of the bound can be used instead \cite{przybylowski2023thresholds}.

    To introduce the theme of the problem a bit more clearly, here are some simple observations as to how this works. 
    \begin{remark}
        For the Kahn-Kalai bounds to provide nontrivial information, it is not enough that $\ell(\mathcal{F}_n)$ is constant or slow growing if $q(\mathcal{F}_n)$ is too large and does not decrease quickly enough.  Likewise, it is not enough that $q(\mathcal{F}_n) = o(1)$ if $\ell(\mathcal{F}_n)$ grows quickly.  More intuitively, the speed at which the upper sets $\mathcal{F}_n$ may 'climb up' $2^{X_n}$ (as measured by the size of the largest minimal element of $\mathcal{F}_n$) determines (or is determined by) how quickly $q(\mathcal{F}_n)$ decreases.  
    \end{remark}
    \begin{remark}
        If both $q(\mathcal{F}_n) = o(1)$ and $\ell(\mathcal{F}_n)$ is bounded, then the Kahn-Kalai bounds provide nontrivial information.  While this covers many interesting cases, there are many other interesting cases where this does not happen. For instance, even in the relatively tame classical setting of random graphs, $\ell(\mathcal{F}_n)$ is not bounded if $\mathcal{F}_n$ corresponds to the subsets of the edges of $K_n$ such that the corresponding edge-induced subgraphs are connected, or have a Hamiltonian cycle, or are non-planar, or  contain a copy of the star graph $S_{n-m}$ for some fixed $m > 0$, or contain a copy of $G_n$ for any sequence of graphs with $|V(G_n)|\leq n$ and $|E(G_n)|<|E(G_{n+1})|$, etc.
    \end{remark}
\section{Background}\label{background}
    Given a finite set $X$, we say that a \textit{property} of $X$ is a subset $\mathcal{F}\subseteq 2^X$.  A property of $X$ is said to be an \textit{upper set} if $B \in \mathcal{F}$ whenever $A \subseteq B$ for some $A \in \mathcal{F}$.  For a customary example, the subset of subgraphs $H$ of a given graph $G$ such that $H$ contains a subgraph isomorphic to some target graph $K$ forms an upper set, as the addition of vertices or edges to a graph preserves subgraph inclusion \cite{erdds1959random}.
    
    We will regard \textit{thresholds} for \textit{sequences} of upper sets in $2^X$ particularly.  For $p \in [0,1]$ define the product measure $\mu_{p}$ on $X$ by $\mu_{p}(S)=p^{|S|}(1-p)^{|X|-|S|}$ and extend this to a probability measure on $2^X$ by $\mu_p(\mathcal{F}):=\sum_{S \in \mathcal{F}}\mu_p(S)$.
    \begin{definition}[Threshold]\label{powerset threshold definition}
        Given a sequence of finite sets $X_n$ and a sequence of upper sets $\mathcal{F}_n \subseteq 2^{X_n}$, we say that a function $p^*(n)$ is a \textit{threshold} for $\mathcal{F}_n$ if $\mu_{p(n)}\big(\mathcal{F}_n\big) \rightarrow 0$ when $\frac{p(n)}{p^*(n)} \rightarrow 0$ and $\mu_{p(n)}\big(\mathcal{F}_n\big) \rightarrow1$ when $\frac{p(n)}{p^*(n)} \rightarrow \infty$.
    \end{definition}
    Every sequence of nontrivial upper sets $\mathcal{F}_n$ has a threshold.  One choice in particular is the \textit{critical probability of $\mathcal{F}$}, denoted $p_c(\mathcal{F})$, which is the unique value $p \in [0,1]$ such that $\mu_p(\mathcal{F})=\frac{1}{2}$.  That is, $p^*(n) := p_c\big(\mathcal{F}_n\big)$ is a threshold for the sequence $\mathcal{F}_n$ \cite{janson2011random}.

    The Kahn-Kalai conjecture \cite{kahn2007thresholds} (now, the Park-Pham theorem \cite{park2024proof}) concerns the relationship between the threshold obtained from the critical probability $p_c$ and the \textit{expectation threshold} of an upper set, a quantity derived from considering certain covers of upper sets. Given an upper set $\mathcal{F} \subseteq 2^X$, we say that a subset $\mathcal{G}\subseteq 2^X$ is a \textit{cover} of $\mathcal{F}$ if $\mathcal{F} \subseteq \bigcup_{S \in \mathcal{G}} \langle S \rangle$, where $\langle S \rangle$ denotes the upper set generated by $S$, i.e. $\langle S \rangle =\left\{T: S \subseteq T\right\}$.  For $p \in [0,1]$, we say that $\mathcal{F}$ is \textit{$p$-small} if there exists a cover $\mathcal{G}$ of $\mathcal{F}$ such that $\sum_{S \in \mathcal{G}}p^{|S|} \leq \frac{1}{2}$.  The \textit{expectation threshold} of an upper set $\mathcal{F}$, denoted $q(\mathcal{F})$, is then defined to be the largest $p$ for which $\mathcal{F}$ is $p$-small.
    
    Now, let $\mathcal{F}_0$ denote the set of minimal elements of $\mathcal{F}$ let $\ell_0(\mathcal{F})=\textrm{max}\left\{|S|: S \in \mathcal{F}_0\right\}$ and write $\ell(\mathcal{F})=\textrm{max}\left\{\ell_0(\mathcal{F}),2\right\}$. The Park-Pham theorem \cite{park2024proof}(previously the Kahn-Kalai conjecture \cite{kahn2007thresholds}) bounds the critical probability by a logarithmic factor of the expectation threshold.
    \begin{theorem}[Park-Pham Theorem]\label{parkpham}
        There exists a universal constant $K$ such that for every finite set $X$ and every nontrivial upper set $\mathcal{F} \subseteq 2^X$, 
        $$
            q(\mathcal{F}) \leq p_c(\mathcal{F}) \leq Kq(\mathcal{F})\log \ell(\mathcal{F}).
        $$
    \end{theorem}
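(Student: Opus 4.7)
The left-hand inequality $q(\mathcal{F}) \leq p_c(\mathcal{F})$ is elementary and would be my starting point. Setting $p = q(\mathcal{F})$, by definition there exists a cover $\mathcal{G}$ of $\mathcal{F}$ with $\sum_{S \in \mathcal{G}} p^{|S|} \leq 1/2$. Since $\mu_p(\langle S \rangle) = p^{|S|}$, the union bound gives $\mu_p(\mathcal{F}) \leq \sum_{S \in \mathcal{G}} p^{|S|} \leq 1/2$, so $p_c(\mathcal{F}) \geq p = q(\mathcal{F})$. This part is a one-line verification.

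The upper bound is the Kahn-Kalai conjecture proper, and my plan is to follow the route opened by Park and Pham. The key reduction is to the contrapositive statement: if $\mu_{K q(\mathcal{F}) \log \ell(\mathcal{F})}(\mathcal{F}) < 1/2$, then $\mathcal{F}$ admits a $q(\mathcal{F})$-small cover, contradicting the definition of $q(\mathcal{F})$ as the supremum of $p$-small rates. The central lemma would be a \emph{boosting} statement: failure of $\mathcal{F}$ to be $p$-small forces a random $Cp$-sample to land in $\mathcal{F}$ with strictly improved probability, and iterating the boost $O(\log \ell)$ times amplifies that improvement past $1/2$.

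Concretely, I would implement each boost via a sprinkled two-layer exposure: draw $R \sim \mu_p$, then independently draw $R' \sim \mu_p$, and regard $R \cup R'$ as a single sample at the slightly enlarged rate. Conditioned on a "bad" $R \notin \mathcal{F}$, the absence of a $p$-small cover should force the residual family $\{T \setminus R : T \in \mathcal{F}_0\}$ to exhibit many approximately-disjoint potential witnesses, which $R'$ then completes with nontrivial probability. The cleanest packaging of this geometric content is the fractional Kahn-Kalai theorem of Frankston-Kahn-Narayanan-Park, which I would invoke as a black-box per-step gain; the logarithmic factor in $\ell$ arises naturally as the number of boosts needed because each minimal element contributes at most $\ell(\mathcal{F})$ coordinates of "room" for the refinement to eat.

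The main obstacle, and the essential content of the Park-Pham breakthrough, is the combinatorial heart of the boosting lemma: translating "no $p$-small cover" into a usable \emph{spread} property of $\mathcal{F}_0$ in the sense of Alweiss-Lovett-Wu-Zhang, and then executing the selector/coupling argument that converts spread into a high-probability hit. Once that lemma is in hand, the remainder is careful bookkeeping to verify that the multiplicative losses at each of the $O(\log \ell)$ layers compound to a single universal constant $K$ that is independent of $X$, $\mathcal{F}$, and $\ell$. I expect the bookkeeping to be routine given the lemma, but finding the right spread/selector framework in which the bookkeeping actually closes is where all of the difficulty — and, historically, the decade and a half between conjecture and proof — really lies.
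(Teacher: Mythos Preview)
The paper does not prove Theorem~\ref{parkpham} at all: it is stated in the Background section with citations to Park--Pham \cite{park2024proof} and Kahn--Kalai \cite{kahn2007thresholds} and then used as a black box throughout. So there is no ``paper's own proof'' to compare against; your proposal is a sketch of a from-scratch proof of a result the paper simply imports.

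On the merits of your sketch: the lower bound via the union bound is correct and standard. For the upper bound, your outline captures the historical arc (ALWZ spread $\to$ FKNP fractional version $\to$ Park--Pham full version), but it conflates the layers somewhat. Invoking FKNP as a black-box per-step gain would only recover the \emph{fractional} expectation threshold bound, not the full statement with $q(\mathcal{F})$; the Park--Pham contribution is precisely that one does \emph{not} need the fractional relaxation or an explicit spread hypothesis. Their actual argument is a short direct iteration on ``minimum fragments'' of uncovered minimal sets, with the cover constructed on the fly from bad samples---no appeal to FKNP or ALWZ-style spread is needed. If you intend to supply a proof here, you should either cite Park--Pham and move on (as the paper does), or follow their self-contained argument rather than routing through the fractional theorem.
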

    A version of the above, given by Bell \cite{bell2023park}, finds an improved value of $K$ and shows that for every finite set $X$ and every nontrivial upper set $\mathcal{F} \subseteq 2^X$, we get $q(\mathcal{F}) \leq p_c(\mathcal{F}) \leq 8q(\mathcal{F})\log \left(2\ell_0(\mathcal{F})\right)$.  It is worth noting that one can optimize $K$ further.  For instance, it was shown in \cite{vu2023short} that one can set $K \approx 3.998$ in the case where $\ell(F_n) \rightarrow \infty$.  Without this restriction, Park and Vondrak \cite{park2024simple} showed $p_c \leq 4.5q \log\left(2\ell_0(\mathcal{F})\right)$ in all cases, which is the best value at present.  It is interesting to note as well that the analog of Theorem~\ref{parkpham} for lower sets does not end up working out as one might expect \cite{gunby2023down,warnke2024note}.

    We make one slight deviation in our notation throughout, to provide more detail.  Here, we will let $K$ will refer to any universal constant satisfying Theorem~\ref{parkpham} for a specified class of sequences $(\mathcal{F}_n)$ of nontrivial monotonically increasing properties, though the motivation is that one should think of it as the best possible $K$ for whatever class of properties to which $(\mathcal{F}_n)$ belong.  For instance, $K\approx 3.998$ is the best known $K$ for the class of sequences of nontrivial upper sets $(\mathcal{F}_n)$ with $\ell(\mathcal{F}_n)\rightarrow \infty$ \cite{vu2023short}, while $K=4.5$ is the best known $K$ for the unrestricted case \cite{park2024simple}.
\section{Easy Positive Cases}
    Since $q(\mathcal{F}_n) \leq p_c(\mathcal{F}_n)\leq Kq(\mathcal{F}_n)\log \ell(\mathcal{F}_n)$, we know that there is an interval of width 
    $$
        Kq(\mathcal{F}_n)\log \ell(\mathcal{F}_n) - q(\mathcal{F}_n) = (K\log\ell(\mathcal{F}_n) - 1)q(\mathcal{F}_n)
    $$
    in which $p_c(\mathcal{F}_n)$ resides.  When this width shrinks to zero (that is, when the Kahn-Kalai bounds provide asymptotically perfect information), we certainly get new information.  This is the best situation and the easiest one to deal with.

    A simple observation:  If $\lim\limits_{n\rightarrow \infty}(K\log\ell(\mathcal{F}_n) - 1)q(\mathcal{F}_n) = 0$, then 
    $$
        \lim\limits_{n\rightarrow \infty}\frac{(K\log\ell(\mathcal{F}_n) - 1)}{1/q(\mathcal{F}_n)} = 0.
    $$
    So, the Kahn-Kalai bounds provide asymptotically perfect information if and only if $K\log\ell(\mathcal{F}_n) - 1 \ll \frac{1}{q(\mathcal{F}_n)}$.  It is not hard to see that we can do away with the constant term and factor of $K$.  This is reasonably obvious: Since $K\log\ell(\mathcal{F}_n) \geq K$, it is straightforward to see that $(K\log\ell(\mathcal{F}_n)-1) \geq K-1$ and $Kq(\mathcal{F}_n)\log\ell(\mathcal{F}_n)-q(\mathcal{F}_n) \geq q(\mathcal{F}_n)(K-1)$.  So, if the Kahn-Kalai bounds provide perfect information, then $q(\mathcal{F}_n)\rightarrow 0$ and, necessarily, so too must $p_c(\mathcal{F}_n)$ and $q(\mathcal{F}_n)\log\ell(\mathcal{F}_n)$.
    \begin{theorem}\label{perf info theorem}
        The Kahn-Kalai bounds provide asymptotically perfect information if and only if $\log \ell(\mathcal{F}_n) \ll \frac{1}{q(\mathcal{F}_n)}$.
    \end{theorem}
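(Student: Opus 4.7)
The plan is to leverage the algebraic manipulation already carried out in the paragraph preceding the statement, and simply push it one step further to strip off the constant $K$ and the additive $-1$. Recall that by definition "asymptotically perfect information" means $(K\log\ell(\mathcal{F}_n)-1)q(\mathcal{F}_n)\to 0$, and the claim to be proved is the equivalence of this with the condition $q(\mathcal{F}_n)\log\ell(\mathcal{F}_n)\to 0$ (which is how the notation $\log\ell(\mathcal{F}_n)\ll 1/q(\mathcal{F}_n)$ unpacks).

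For the forward direction, I would expand
$$
(K\log\ell(\mathcal{F}_n)-1)q(\mathcal{F}_n) = Kq(\mathcal{F}_n)\log\ell(\mathcal{F}_n) - q(\mathcal{F}_n).
$$
The preceding remark in the paper already shows that if the left side tends to $0$ then $q(\mathcal{F}_n)\to 0$ (since $\ell(\mathcal{F}_n)\geq 2$ gives $K\log\ell(\mathcal{F}_n)-1\geq K\log 2-1>0$, which is a strictly positive constant for any admissible $K$). Combining $q(\mathcal{F}_n)\to 0$ with $Kq(\mathcal{F}_n)\log\ell(\mathcal{F}_n) - q(\mathcal{F}_n)\to 0$ immediately yields $Kq(\mathcal{F}_n)\log\ell(\mathcal{F}_n)\to 0$, and dividing by the constant $K$ gives $q(\mathcal{F}_n)\log\ell(\mathcal{F}_n)\to 0$, as required.

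For the reverse direction, I would start from the hypothesis $q(\mathcal{F}_n)\log\ell(\mathcal{F}_n)\to 0$. Using $\log\ell(\mathcal{F}_n)\geq \log 2$ once more, it follows that $q(\mathcal{F}_n)\leq q(\mathcal{F}_n)\log\ell(\mathcal{F}_n)/\log 2\to 0$. Substituting both limits into the expansion
$$
(K\log\ell(\mathcal{F}_n)-1)q(\mathcal{F}_n) = Kq(\mathcal{F}_n)\log\ell(\mathcal{F}_n) - q(\mathcal{F}_n)
$$
yields $(K\log\ell(\mathcal{F}_n)-1)q(\mathcal{F}_n)\to 0$, i.e., asymptotically perfect information.

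There is no real obstacle here: the proof is a two-line manipulation once one observes that the uniform lower bound $\ell(\mathcal{F}_n)\geq 2$ lets $\log\ell(\mathcal{F}_n)$ be absorbed into constants at the appropriate moment. The only thing to be careful about is making explicit the (minor) point that the equivalence is unaffected by the constants $K$ and $-1$ precisely because $\ell(\mathcal{F}_n)$ is bounded away from $1$, which justifies both the extraction of $q(\mathcal{F}_n)\to 0$ in the forward direction and the cancellation of the additive $q(\mathcal{F}_n)$ in the reverse direction.
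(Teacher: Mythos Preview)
Your proposal is correct and follows essentially the same route as the paper: expand $(K\log\ell(\mathcal{F}_n)-1)q(\mathcal{F}_n)$, use the uniform lower bound $\ell(\mathcal{F}_n)\geq 2$ to extract $q(\mathcal{F}_n)\to 0$, and then strip the constants. The paper's surrounding discussion only spells out the forward direction in detail, so your explicit treatment of the reverse direction is a welcome addition but not a departure in method.
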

    So, a straightforward answer.
    \begin{remark}\label{interesting case remark}
        In the case where $\ell(\mathcal{F}_n)\rightarrow \infty$, suppose the Kahn-Kalai bounds provide nontrivial information.  Immediately, this yields the existence of $N$ such that $\log\ell(\mathcal{F}_n)<\frac{1}{Kq(\mathcal{F}_n)}$ for all $n \geq N$ and $\frac{1}{q(\mathcal{F}_n)}\rightarrow \infty$.  That is, if $\ell(\mathcal{F}_n)\rightarrow \infty$, then getting any information at all automatically yields $\frac{1}{q(\mathcal{F}_n)}\rightarrow \infty$, so the only barrier to obtaining perfect information is that the latter may not grow at a fast enough rate relative to the former.
    \end{remark}
    \begin{remark}
        If there is $0<C \leq 1$ such that $\lim\limits_{n\rightarrow \infty}Kq(\mathcal{F}_n)\log\ell(\mathcal{F}_n)=C$, then the Kahn-Kalai bounds provide new--but not perfect--information and $q(\mathcal{F}_n)\log\ell(\mathcal{F}_n) \rightarrow \frac{C}{K} \leq \frac{1}{K}$.  That is, asymptotically perfect information is equivalent to having $\frac{1}{q(\mathcal{F}_n)}$ grow much faster than $\log\ell(\mathcal{F}_n)$ while new information (supposing the limit of the Kahn-Kalai bound exists) is equivalent to having $\frac{1}{q(\mathcal{F}_n)}$ grow, asymptotically, at least $K$ times faster than $\log\ell(\mathcal{F}_n)$.  
        
        If limit of the Kahn-Kalai bound \textit{does not} exist, then we are in a messier situation and should not expect a nice if-and-only-if statement that completely characterizes new information in terms of the growth rates of the quantities involved in the bounds.  However, as we will do in the next sections, we can still obtain a decent partial characterization via some reasonably strong necessary conditions.
    \end{remark}
\section{A Simple Case Where We Do Not Get New Information}
    There are some cases where the Kahn-Kalai bounds do not provide nontrivial information that may not be immediately obvious.  The easiest is for sequences $\mathcal{F}_n$ of nontrivial (i.e.  $\mathcal{F}_n\neq \emptyset, 2^{X_n}$)  principal upper sets (i.e. $\mathcal{F}_n$ is generated by a single element).
    \begin{theorem}\label{expectation threshold for prinicpal upper set}
        Suppose $\mathcal{F}_n$ is a sequence of nontrivial principal upper sets.  Then, the Kahn-Kalai bounds do not provide nontrivial information if $K \geq 2$.
    \end{theorem}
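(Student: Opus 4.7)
My plan is to compute $q(\mathcal{F}_n)$ and $\ell(\mathcal{F}_n)$ in closed form for an arbitrary nontrivial principal upper set, and then check the inequality $Kq(\mathcal{F}_n)\log\ell(\mathcal{F}_n) \geq 1$ directly. Writing $A_n$ for the generator of $\mathcal{F}_n$, nontriviality forces $A_n$ to be a nonempty proper subset of $X_n$, so $k_n := |A_n| \geq 1$. Since $A_n$ is the unique minimal element, $\ell(\mathcal{F}_n) = \max\{k_n, 2\}$ is immediate.

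The first substantive step identifies the optimal cover. Any cover $\mathcal{G}$ of $\langle A_n \rangle$ must contain some $S \subseteq A_n$ in order to cover $A_n$ itself, so for every $p \in (0,1)$ we get $\sum_{S \in \mathcal{G}} p^{|S|} \geq p^{k_n}$, with equality realized by the singleton cover $\{A_n\}$. Consequently, $\mathcal{F}_n$ is $p$-small exactly when $p^{k_n} \leq 1/2$, yielding $q(\mathcal{F}_n) = 2^{-1/k_n}$.

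All that remains is the single-variable inequality $K f(k) \geq 1$ for every integer $k \geq 1$, where $f(k) := 2^{-1/k}\log\max\{k,2\}$. Direct evaluation gives $f(1) = \frac{1}{2}\log 2$, and a brief calculus check (differentiating $\log f$ on $[2,\infty)$, where both summands of the derivative are visibly positive) shows $f$ is increasing there, so $\min_{k \geq 1} f(k) = f(1)$. The required inequality $K f(1) \geq 1$ rearranges to $K \geq 2/\log 2$, which under the base-$2$ convention becomes exactly the stated hypothesis $K \geq 2$. Hence the Kahn-Kalai bound is at least $1$ for every $n$, so it cannot satisfy the eventual strict inequality demanded for the bounds to provide nontrivial information.

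The crux of the argument is the optimality of the singleton cover; once that is in hand, the rest is bookkeeping on a one-variable function, and I anticipate no substantial obstacle beyond the short calculus check and keeping track of the logarithm base so that the clean threshold $K \geq 2$ appears.
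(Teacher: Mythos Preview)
Your proof is correct and rests on the same key observation as the paper's: the singleton cover $\{A_n\}$ shows $q(\mathcal{F}_n)\geq 2^{-1/k_n}$. The paper, however, skips both your exact computation of $q$ and your calculus minimization of $f$; it simply combines the two crude bounds $\log\ell(\mathcal{F}_n)\geq\log 2=1$ (base $2$, since $\ell\geq 2$ always) and $2^{-1/k_n}\geq 1/2$ to obtain $Kq(\mathcal{F}_n)\log\ell(\mathcal{F}_n)\geq K/2\geq 1$ in one line---the same minimum value you eventually found, without needing the optimality of the singleton cover or the monotonicity check on $[2,\infty)$.
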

    \begin{proof}
        Suppose $\mathcal{F}_n$ is a nontrivial principal upper set in $2^{X_n}$, i.e. $(\mathcal{F}_n)_0=\left\{S_n \right\}$ for some element $S_n \in 2^{X_n}$ with $S_n \neq \emptyset$ and $S_n\neq X_n$ (where, recall, $(\mathcal{F}_n)_0$ denotes the set of minimal elements of $\mathcal{F}_n$). Since $(\mathcal{F}_n)_0$ is a cover of $\mathcal{F}_n$, we have
        $q(\mathcal{F}_n) \geq \textrm{max}\left\{p \in [0,1] : \sum_{S \in (\mathcal{F}_n)_0}p^{|S|} \leq 1/2\right\}$.  But, if $(\mathcal{F}_n)_0=\left\{S_n\right\}$, then the largest value of $p$ is such that $p^{|S_n|} \leq \frac{1}{2}$ is simply $p=2^{-1/|S_n|}$.  So, $q(\mathcal{F}_n) \geq 2^{-1/|S_n|}$ and $Kq(\mathcal{F}_n)\log\ell(\mathcal{F}_n)\geq K2^{-1/|S_n|} \geq K/2 \geq 1$.
    \end{proof}
    A tiny variation on the argument above shows that we can also rule out any non-principal upper set that is covered by a principal upper set.
    \begin{lemma}\label{intersection no-go lemma}
        Let $\mathcal{F}_n$ be a sequence of nontrivial upper sets whose minimal elements do not eventually have empty intersection--i.e., there does not exist some $N$ such that $\cap_{S \in (\mathcal{F}_n)_0}S = \emptyset$ for all $n > N$.  Then, the Kahn-Kalai bounds do not provide nontrivial information if $K \geq 2$.
    \end{lemma}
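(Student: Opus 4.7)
The plan is to emulate the proof of Theorem~\ref{expectation threshold for prinicpal upper set} by producing, for infinitely many $n$, a cover of $\mathcal{F}_n$ by a single singleton. Unwinding the hypothesis, ``not eventually empty intersection'' means that for every $N$ there is some $n \geq N$ with $\bigcap_{S \in (\mathcal{F}_n)_0} S \neq \emptyset$; denote this infinite index set by $I$. For each $n \in I$, pick any $x_n \in \bigcap_{S \in (\mathcal{F}_n)_0} S$. Then every minimal element of $\mathcal{F}_n$ contains $x_n$, and since every element of an upper set lies above some minimal element, every $T \in \mathcal{F}_n$ contains $x_n$. Hence $\mathcal{G}_n := \{\{x_n\}\}$ is a cover of $\mathcal{F}_n$.

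Next I would read off the resulting lower bound on the expectation threshold. The cover $\mathcal{G}_n$ witnesses $p$-smallness whenever $p = p^{1} \leq 1/2$, so $q(\mathcal{F}_n) \geq 1/2$ for every $n \in I$. Combined with $\log\ell(\mathcal{F}_n) \geq 1$ (from the convention $\ell(\mathcal{F}_n) \geq 2$), this yields
$$
    Kq(\mathcal{F}_n)\log\ell(\mathcal{F}_n) \;\geq\; \frac{K}{2} \;\geq\; 1
$$
for all $n \in I$ whenever $K \geq 2$. Since $I$ is infinite, there is no $N$ for which $Kq(\mathcal{F}_n)\log\ell(\mathcal{F}_n) < 1$ holds for all $n \geq N$, and so the Kahn-Kalai bounds fail to provide nontrivial information on $\mathcal{F}_n$.

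The only genuine subtlety, and the place I would be most careful, is correctly reading the quantifier in the hypothesis: ``not eventually empty'' is \emph{infinitely often nonempty}, not ``always nonempty''. This is enough for the argument because the conclusion we want to negate is itself an eventual statement, so it suffices to exhibit an infinite family of bad indices. Everything else is a direct reuse of the singleton-cover trick from Theorem~\ref{expectation threshold for prinicpal upper set}, with the common element $x_n$ playing the role that the unique generator $S_n$ played there.
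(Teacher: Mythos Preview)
Your proof is correct and follows essentially the same approach as the paper: exploit the nonempty common intersection to produce a one-element cover, read off $q(\mathcal{F}_n)\geq 1/2$, and combine with $\log\ell(\mathcal{F}_n)\geq 1$ and $K\geq 2$. The paper takes the full intersection $\bigcap_{S\in(\mathcal{F}_n)_0}S$ as its single cover element rather than your singleton $\{x_n\}$, but the resulting bound and conclusion are identical; you are also more explicit than the paper about the ``infinitely often'' reading of the hypothesis and why that suffices to defeat the eventual statement.
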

    \begin{proof}
        If $\cap_{S \in (\mathcal{F}_n)_0}S \neq \emptyset$, then observe that $\mathcal{G}=\left\{\cap_{S \in (\mathcal{F}_n)_0}S \right\}$ is a cover of $\mathcal{F}_n$.  So, $q(\mathcal{F}_n) \geq 2^{\frac{-1}{|\cap_{S \in (\mathcal{F}_n)_0}S|}}$ and $Kq(\mathcal{F}_n)\log\ell(\mathcal{F}_n)\geq K2^{-\frac{1}{|\cap_{S \in (\mathcal{F}_n)_0}S|}} \geq K/2 \geq 1$. 
    \end{proof}
    Via the contrapositive, $Kq(\mathcal{F}_n)\log\ell(\mathcal{F}_n) < 1$ for all $n > N$ for some $N$ implies that $\cap_{S \in (\mathcal{F}_n)_0}S = \emptyset$ must also hold.  That is, the minimal elements of $\mathcal{F}_n$ must eventually `spread out' a little bit. In the following section, we will generalize this and show that if $\ell(\mathcal{F}_n)\rightarrow \infty$, then they must spread out \textit{quite} a bit.
\section{A Less Simple Case Where We Do Not Get New Information}
    Notice that if $\cap_{S \in (\mathcal{F}_n)_0}S = \emptyset$, then each nontrivial cover of $\mathcal{F}_n$ must have at least two elements.  Likewise, if the intersections of all-but-one of the elements of $(\mathcal{F}_n)_0$ are empty, then any nontrivial cover of $\mathcal{F}_n$ must have at least three elements.  Compactly, this means that $\cup_{S \in (\mathcal{F}_n)_0}\left(\cap_{S\neq H \in (\mathcal{F}_n)_0}H\right)=\emptyset$.  We will use this line of reasoning to determine some necessary conditions for the Kahn-Kalai bounds to provide nontrivial information.  First, we give the following definition for notational convenience:
    \begin{definition}
        The \textit{covering dimension} of an upper set $\mathcal{F} \subseteq 2^X$, denoted $\textrm{dim}(\mathcal{F})$, is the minimum number of elements in any nontrivial cover $\mathcal{G}$ of $\mathcal{F}$.  That is, $$\textrm{dim}(\mathcal{F})=\textrm{min}\left\{ |\mathcal{G}| : \mathcal{G} \subseteq 2^X, \enskip \mathcal{F} \subseteq \langle \mathcal{G} \rangle \neq 2^X  \right\}.$$
    \end{definition}
    For an example, any principal upper set or upper set covered by a principal upper set (i.e. an upper set whose minimal elements intersect nontrivially) has covering dimension 1.  This was the simple case addressed in the previous section and, analogously to it, the covering dimension of an upper set $\mathcal{F}$ will let us estimate $q(\mathcal{F})$.
    \begin{lemma}\label{q estimate}
        $(2\textrm{dim}\mathcal{F})^{-1} \leq q(\mathcal{F}) \leq (2\textrm{dim}\mathcal{F})^{-1/\ell(\mathcal{F})}$
    \end{lemma}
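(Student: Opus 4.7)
The plan is to prove the two inequalities separately, each by a direct argument from the definitions of $q(\mathcal{F})$ and $\textrm{dim}(\mathcal{F})$, and in both cases the central comparison is between the $p$-smallness sum $\sum p^{|S|}$ and the cardinality $|\mathcal{G}|$ of a witnessing cover.

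For the lower bound $q(\mathcal{F}) \geq (2\textrm{dim}\mathcal{F})^{-1}$, I would set $p = (2\textrm{dim}\mathcal{F})^{-1}$ and exhibit a cover witnessing that $\mathcal{F}$ is $p$-small. The natural candidate is a minimum nontrivial cover $\mathcal{G}$ with $|\mathcal{G}| = \textrm{dim}\mathcal{F}$. Since $\mathcal{G}$ is nontrivial we have $\emptyset \notin \mathcal{G}$ (as $\langle\emptyset\rangle = 2^X$), so $|S| \geq 1$ and hence $p^{|S|} \leq p$ for each $S \in \mathcal{G}$. Summing, $\sum_{S\in\mathcal{G}} p^{|S|} \leq |\mathcal{G}|\cdot p = 1/2$, which establishes $p$-smallness and hence $q(\mathcal{F}) \geq p$.

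For the upper bound $q(\mathcal{F}) \leq (2\textrm{dim}\mathcal{F})^{-1/\ell(\mathcal{F})}$, I would take any $p$ for which $\mathcal{F}$ is $p$-small, with witness cover $\mathcal{G}$, and bound $p$ from above. The key technical step is to replace $\mathcal{G}$ by a sub-cover $\mathcal{G}'$ every element of which has size at most $\ell(\mathcal{F})$. Concretely, set $\mathcal{G}' = \{T \in \mathcal{G} : T \subseteq M \text{ for some } M \in \mathcal{F}_0\}$. Then $\mathcal{G}'$ still covers $\mathcal{F}$: any $F \in \mathcal{F}$ contains some minimal element $M$, which is covered by some $T \in \mathcal{G}$ with $T \subseteq M$, and this $T$ lies in $\mathcal{G}'$ by construction. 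Moreover $\mathcal{G}'$ is nontrivial, since $\emptyset \notin \mathcal{G}$ (otherwise $\sum p^{|S|} \geq 1$ would preclude $p$-smallness), so $|\mathcal{G}'| \geq \textrm{dim}\mathcal{F}$. Combined with $|T| \leq \ell_0(\mathcal{F}) \leq \ell(\mathcal{F})$ for all $T \in \mathcal{G}'$, this gives $\sum_{T\in\mathcal{G}'} p^{|T|} \geq \textrm{dim}\mathcal{F} \cdot p^{\ell(\mathcal{F})}$; comparing to $\sum p^{|T|} \leq 1/2$ and rearranging yields $p \leq (2\textrm{dim}\mathcal{F})^{-1/\ell(\mathcal{F})}$, hence the claim upon taking the supremum over admissible $p$.

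The argument is not conceptually deep, but the main point requiring care is the passage to $\mathcal{G}'$ in the upper bound: it uses both that $\mathcal{F}$ is an upper set (so every element contains a minimal one) and that $\mathcal{G}$ was already a cover. A secondary subtlety is verifying that $\mathcal{G}'$ remains nontrivial so that the covering-dimension bound $|\mathcal{G}'| \geq \textrm{dim}\mathcal{F}$ applies; this is automatic from $p$-smallness, as observed above.
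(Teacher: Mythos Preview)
Your proof is correct and follows essentially the same route as the paper's: both use a minimum-size nontrivial cover together with $p^{|S|}\le p$ for the lower bound, and the comparison $\textrm{dim}(\mathcal{F})\cdot p^{\ell(\mathcal{F})}\le \sum_{S\in\mathcal{G}} p^{|S|}\le 1/2$ for the upper bound. Your explicit passage to the sub-cover $\mathcal{G}'=\{T\in\mathcal{G}:T\subseteq M\text{ for some }M\in\mathcal{F}_0\}$ is in fact more careful than the paper, which simply asserts $\max\{|S|:S\in\mathcal{G}\}\le\ell(\mathcal{F})$ for the optimal witnessing cover without justification.
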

    \begin{proof}
        Let $\mathcal{G}$ be a cover of $\mathcal{F}$ such that $q(\mathcal{F}) = \textrm{max}\left\{p \in [0,1] : \sum_{S \in \mathcal{G}}p^{|S|} \leq 1/2\right\}$. Taking any $S_1,\hdots,S_{\textrm{dim}\mathcal{F}} \in \mathcal{G}$, we have $q(\mathcal{F})^{|S_1|}+\hdots+q(\mathcal{F})^{|S_{\textrm{dim}\mathcal{F}}|} \leq 1/2$.  Noting $\textrm{max}\left\{|S| : S \in \mathcal{G}\right\} \leq \ell(\mathcal{F})$, then $\textrm{dim}(\mathcal{F})q(\mathcal{F})^{\ell(\mathcal{F})}\leq q(\mathcal{F})^{|S_1|}+\hdots+q(\mathcal{F})^{|S_{\textrm{dim}\mathcal{F}}|} \leq 1/2
        $ and $q(\mathcal{F}) \leq \big(2\textrm{dim}(\mathcal{F})\big)^{-1/\ell(\mathcal{F}_n)}$.

        For the other inequality, let $\mathcal{H}$ be any cover of $\mathcal{F}$ with $\textrm{dim}(\mathcal{F})=|\mathcal{H}|$ and let $p^*$ be the unique $p$ such that $\sum_{s \in \mathcal{H}}(p^*)^{|S|}=1/2$.  Then, $p^* \leq q(\mathcal{F})$. Similarly, notice $1/2=\sum_{s \in \mathcal{H}}(p^*)^{|S|}\leq \textrm{dim}(\mathcal{F})(p^*)$, so $p^*\geq (2\textrm{dim}\mathcal{F})^{-1}$ and $q(\mathcal{F}) \geq (2\textrm{dim}\mathcal{F})^{-1}$.
    \end{proof}
    \begin{remark}
        An obvious fact this tells us is that, to get new information, the dimension need only be large enough relative to a function of $\ell(\mathcal{F}_n)$ (i.e., if $\textrm{dim}(\mathcal{F}_n) > \frac{1}{2}\big(K\log\ell(\mathcal{F}_n)\big)^{\ell(\mathcal{F}_n)}$ for all sufficiently large $n$, the Kahn-Kalai bounds provide nontrivial information).  Likewise, combining the observation in Corollary~\ref{perf info theorem} with Lemma~\ref{q estimate}, it is easy to see that the Kahn-Kalai bounds provide asymptotically perfect information whenever $\log\ell(\mathcal{F}_n) \ll (2\textrm{dim}\mathcal{F}_n)^{1/\ell(\mathcal{F}_n)}$.  However, it is exceedingly rare that this is of any practical use, since it requires both that $\textrm{dim}\mathcal{F}_n$ is known and quite large.
    \end{remark}
    \begin{example}
        Let $\mathcal{F}_n$ corresponds to the subsets of the edges of $K_n$ such that the corresponding edge-induced subgraphs are connected.  Then, $\ell(\mathcal{F}_n)=n-1$ and the dimension of $\mathcal{F}_n$ is the number of spanning trees in $K_n$, so $\textrm{dim}(\mathcal{F}_n)=n^{n-2}$.  Hence, $$\lim\limits_{n\rightarrow \infty} \frac{\log\ell(\mathcal{F}_n)}{ (2\textrm{dim}\mathcal{F}_n)^{1/\ell(\mathcal{F}_n)}}=\lim\limits_{n\rightarrow \infty} \frac{\log(n-1)}{ (2n^{n-2})^{1/(n-1)}}=0$$ and the Kahn-Kalai bounds provide perfect information.
    \end{example}
    \begin{remark}\label{dim gets big}
        If $\ell(\mathcal{F}_n) \rightarrow \infty$ and the Kahn-Kalai bounds provide nontrivial information, then it is necessary that $\frac{1}{q(\mathcal{F})}\rightarrow \infty$ (as per Remark~\ref{interesting case remark}).  By Lemma~\ref{q estimate}, it is also necessary that $\textrm{dim}\mathcal{F}_n \rightarrow \infty$. Since $\textrm{dim}\mathcal{F}_n \leq \left|(\mathcal{F}_n)_0\right|$, we get the following:
        \begin{corollary}\label{min elem gets big}
            If $\ell(\mathcal{F}_n) \rightarrow \infty$ and the Kahn-Kalai bounds provide nontrivial information, then it is necessary that $\left|(\mathcal{F}_n)_0\right| \rightarrow \infty$.    
        \end{corollary}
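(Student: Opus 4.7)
The plan is to chain together the three ingredients already set up in the preceding material: Remark~\ref{interesting case remark}, Lemma~\ref{q estimate}, and the elementary observation that the set of minimal elements of $\mathcal{F}_n$ is itself a cover. The argument is essentially packaged in the discussion of Remark~\ref{dim gets big}; the corollary just harvests its conclusion.

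First I would invoke Remark~\ref{interesting case remark}: assuming $\ell(\mathcal{F}_n) \to \infty$ together with the nontriviality condition $Kq(\mathcal{F}_n)\log \ell(\mathcal{F}_n) < 1$ for all sufficiently large $n$, one gets $\log \ell(\mathcal{F}_n) < \tfrac{1}{K q(\mathcal{F}_n)}$, which forces $\tfrac{1}{q(\mathcal{F}_n)} \to \infty$ (equivalently, $q(\mathcal{F}_n) \to 0$).

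Next I would apply the lower bound in Lemma~\ref{q estimate}, namely $q(\mathcal{F}_n) \geq (2\,\textrm{dim}\,\mathcal{F}_n)^{-1}$. Rearranging yields $\textrm{dim}(\mathcal{F}_n) \geq \tfrac{1}{2 q(\mathcal{F}_n)}$, and combining with the previous step gives $\textrm{dim}(\mathcal{F}_n) \to \infty$.

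Finally I would close the argument by noting that $(\mathcal{F}_n)_0$ is itself a (nontrivial) cover of $\mathcal{F}_n$, so by definition of covering dimension $\textrm{dim}(\mathcal{F}_n) \leq |(\mathcal{F}_n)_0|$. Hence $|(\mathcal{F}_n)_0| \to \infty$ as claimed. There is no serious obstacle here: each step is either a restatement of a prior result or the immediate bound $\textrm{dim}(\mathcal{F}) \leq |\mathcal{F}_0|$. The only mild care needed is to ensure the cover $(\mathcal{F}_n)_0$ is indeed \emph{nontrivial} in the sense required by $\textrm{dim}$, but this is automatic since $\mathcal{F}_n$ itself is nontrivial, i.e.\ $\langle (\mathcal{F}_n)_0 \rangle = \mathcal{F}_n \neq 2^{X_n}$.
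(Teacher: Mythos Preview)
Your proposal is correct and follows exactly the route the paper takes: use Remark~\ref{interesting case remark} to get $q(\mathcal{F}_n)\to 0$, apply the lower bound of Lemma~\ref{q estimate} to force $\textrm{dim}(\mathcal{F}_n)\to\infty$, and then invoke $\textrm{dim}(\mathcal{F}_n)\leq |(\mathcal{F}_n)_0|$. This is precisely the content of Remark~\ref{dim gets big}, of which the corollary is the stated conclusion.
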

    \end{remark}
    More idiosyncratically, we can try to bound the dimension of an upper set using the elementary symmetric polynomials.  Let $\sigma_k$ to denote the $k^{th}$ 'lattice theoretic' elementary symmetric polynomial \cite{ebcba4a9-c880-3a5a-8faa-11dfea58ed2e}, where the addition and multiplication of the classical elementary symmetric polynomial are replaced by the join and meet of the lattice, respectively (e.g., if $S_1,\hdots,S_m \in 2^{X_n}$, then the join is the union and the meet is the intersection, so 
    $\sigma_k(S_1,\hdots,S_m) = \bigcup_{I \subseteq \left\{1,\hdots,m\right\}, \enskip |I|=k}\left(\bigcap_{i \in I}S_i\right)$.)    
    \begin{theorem}\label{dim inequality}
        Suppose $\mathcal{F}$ is a nontrivial upper set in $2^X$ and write $\mathcal{F}_0=\left\{S_1,\hdots,S_{|\mathcal{F}_0|}\right\}$.  For $1 \leq m \leq {|\mathcal{F}_0|}$, if $\textrm{dim}(\mathcal{F}) > |\mathcal{F}_0|-m+1$, then $\sigma_{m}(S_1,\hdots,S_{|\mathcal{F}_0|})=\emptyset$.
    \end{theorem}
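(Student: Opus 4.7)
The plan is to prove the contrapositive: assuming $\sigma_m(S_1,\ldots,S_{|\mathcal{F}_0|}) \neq \emptyset$, I will exhibit a nontrivial cover of $\mathcal{F}$ of size at most $|\mathcal{F}_0|-m+1$, thereby forcing $\textrm{dim}(\mathcal{F}) \leq |\mathcal{F}_0|-m+1$. The key intuition is that whenever the meet of some $m$ minimal elements is nonempty, those $m$ sets can be ``collapsed'' into a single generator, while the remaining $|\mathcal{F}_0|-m$ minimal elements are kept as-is.

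Concretely, suppose $\sigma_m(S_1,\ldots,S_{|\mathcal{F}_0|})\neq\emptyset$. By the definition of $\sigma_m$ in the excerpt, there exists some $I \subseteq \{1,\ldots,|\mathcal{F}_0|\}$ with $|I|=m$ such that $T := \bigcap_{i \in I}S_i \neq \emptyset$. I would then set
\[
    \mathcal{G} := \{T\} \cup \{S_j : j \in \{1,\ldots,|\mathcal{F}_0|\}\setminus I\},
\]
which has at most $1 + (|\mathcal{F}_0|-m) = |\mathcal{F}_0|-m+1$ elements.

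The next step is to verify that $\mathcal{G}$ is indeed a cover of $\mathcal{F}$. Since $\mathcal{F} = \bigcup_{i=1}^{|\mathcal{F}_0|}\langle S_i\rangle$, it suffices to show each $\langle S_i\rangle$ lies in $\langle\mathcal{G}\rangle$. For $i \in I$, we have $T \subseteq S_i$, so $\langle S_i\rangle \subseteq \langle T\rangle \subseteq \langle\mathcal{G}\rangle$; for $j \notin I$, the set $S_j$ itself belongs to $\mathcal{G}$, giving $\langle S_j\rangle \subseteq \langle\mathcal{G}\rangle$ directly.

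The only remaining obligation is nontriviality, namely $\langle\mathcal{G}\rangle \neq 2^{X}$. This is where I would be most careful, though in the end it is routine: because $\mathcal{F}$ is nontrivial we have $\emptyset \notin \mathcal{F}_0$, so each $S_j$ is nonempty, and by construction $T$ is nonempty as well. Hence no element of $\mathcal{G}$ is $\emptyset$, so $\emptyset \notin \langle\mathcal{G}\rangle$ and the cover is nontrivial. Applying the definition of $\textrm{dim}(\mathcal{F})$ then yields $\textrm{dim}(\mathcal{F}) \leq |\mathcal{G}| \leq |\mathcal{F}_0|-m+1$, contradicting the hypothesis $\textrm{dim}(\mathcal{F}) > |\mathcal{F}_0|-m+1$ and completing the contrapositive.
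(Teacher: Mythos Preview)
Your proof is correct and follows essentially the same approach as the paper: prove the contrapositive by collapsing the $m$ minimal elements with nonempty intersection into a single generator $T=\bigcap_{i\in I}S_i$ and keeping the remaining $|\mathcal{F}_0|-m$ minimal elements to form a cover of size at most $|\mathcal{F}_0|-m+1$. If anything, your write-up is slightly more careful than the paper's, since you explicitly verify both that $\mathcal{G}$ covers $\mathcal{F}$ and that $\langle\mathcal{G}\rangle\neq 2^X$, whereas the paper simply asserts these.
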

    \begin{proof}
        Suppose that $\sigma_m(S_1,\hdots,S_{|\mathcal{F}_0|}) \neq \emptyset$.  Then, there is some subset $I \subseteq \left\{1,\hdots,{|\mathcal{F}_0|}\right\}$ of size $m$ such that $\cap_{i\in I} S_i \neq \emptyset$.  So, the set $\mathcal{G}=\left\{\cap_{i\in I} S_i\right\} \cup \left\{ S_i \in \mathcal{F}_0 : i \not\in I\right\}$ is a cover of $\mathcal{F}$ with $|\mathcal{F}_0|-m+1$ elements and $\textrm{dim}(\mathcal{F})\leq {|\mathcal{F}_0|}-m+1$.  Taking the contrapositive, $\textrm{dim}(\mathcal{F}) > |\mathcal{F}_0|-m+1$ implies $\sigma_m(S_1,\hdots,S_{|\mathcal{F}_0|}) = \emptyset$
    \end{proof} 
    \begin{corollary}\label{dim formula}
        $\textrm{dim}(\mathcal{F}) \leq |\mathcal{F}_0|+1-\textrm{max}\left\{1 \leq m \leq |\mathcal{F}_0|: \sigma_{m}(S_i : S_i \in \mathcal{F}_0)\neq \emptyset\right\}$.  
    \end{corollary}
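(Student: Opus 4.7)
The plan is to obtain the stated bound as an immediate contrapositive application of Theorem~\ref{dim inequality} evaluated at the largest admissible index. The work is almost entirely bookkeeping; there is no real obstacle beyond checking that the maximum referenced in the statement is taken over a nonempty set.

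First I would set $m^{*} = \max\left\{1 \leq m \leq |\mathcal{F}_0|: \sigma_{m}(S_i : S_i \in \mathcal{F}_0)\neq \emptyset\right\}$ and verify that this maximum exists. The key point is that $\sigma_1(S_1, \ldots, S_{|\mathcal{F}_0|}) = \bigcup_i S_i$, and since $\mathcal{F}$ is nontrivial we have $\emptyset \notin \mathcal{F}$ (because $\mathcal{F}$ is an upper set distinct from $2^X$), so every minimal element $S_i$ is nonempty. Thus $\sigma_1 \neq \emptyset$, confirming that $m = 1$ lies in the set and $m^{*}$ is well-defined.

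Next I would invoke the contrapositive of Theorem~\ref{dim inequality}: if $\sigma_m(S_1, \ldots, S_{|\mathcal{F}_0|}) \neq \emptyset$, then $\textrm{dim}(\mathcal{F}) \leq |\mathcal{F}_0| - m + 1$. Applying this with $m = m^{*}$ yields
$$
\textrm{dim}(\mathcal{F}) \leq |\mathcal{F}_0| - m^{*} + 1 = |\mathcal{F}_0| + 1 - \max\left\{1 \leq m \leq |\mathcal{F}_0| : \sigma_m(S_i : S_i \in \mathcal{F}_0) \neq \emptyset\right\},
$$
which is precisely the asserted inequality.

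The only subtlety worth flagging is the nonemptiness of the index set, which I would handle as above; aside from this, the corollary is genuinely a one-line consequence of the previous theorem, since Theorem~\ref{dim inequality} was stated in the exact form needed to pull out the maximum $m$ and read off a bound on $\textrm{dim}(\mathcal{F})$.
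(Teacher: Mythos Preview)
Your proof is correct and follows essentially the same approach as the paper: both verify that the maximum is well-defined via $\sigma_1=\bigcup_i S_i\neq\emptyset$ and then extract the bound from a single application of Theorem~\ref{dim inequality}. Your route is in fact a bit more economical---you apply the contrapositive of Theorem~\ref{dim inequality} directly at $m^{*}$, whereas the paper sets $n=\textrm{dim}(\mathcal{F})$, applies the forward implication at $m=|\mathcal{F}_0|-n+2$, and then invokes the monotonicity $\sigma_i\neq\emptyset\Rightarrow\sigma_j\neq\emptyset$ for $j\leq i$ to locate $t$ relative to that index; your argument does not need that monotonicity step at all.
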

    \begin{proof}
        Observe that if $\sigma_i(S_1,\hdots,S_{|\mathcal{F}_0|}) \neq \emptyset$, then $\sigma_j(S_1,\hdots,S_{|\mathcal{F}_0|}) \neq \emptyset$ for all $1 \leq j \leq i$ as well.  So,  $t=\textrm{max}\left\{1 \leq m \leq |\mathcal{F}_0|: \sigma_{m}(S_i : S_i \in \mathcal{F}_0)\neq \emptyset\right\}$ is well defined, since $\sigma_1(S_i : S_i \in \mathcal{F}_0) = \bigcup_{S_i \in \mathcal{F}_0}S_i \neq \emptyset$.  Suppose $\textrm{dim}(\mathcal{F}) = n$.  Then, $\textrm{dim}(\mathcal{F})>n-1 = |\mathcal{F}_0| - (|\mathcal{F}_0|-n+2)+1$ and $\sigma_{|\mathcal{F}_0|-n+2}(S_1,\hdots,S_{|\mathcal{F}_0|})=\emptyset$ by Theorem~\ref{dim inequality}.  So, $t < |\mathcal{F}_0|-n+2 = |\mathcal{F}_0|-\textrm{dim}(\mathcal{F})+2$  and $\textrm{dim}(\mathcal{F}) < |\mathcal{F}_0| - t+2$. Thus, $\textrm{dim}(\mathcal{F}) \leq |\mathcal{F}_0| - t +1$. 
    \end{proof}
    Using the Remark~\ref{dim gets big}, this gives us the following direct analog to Theorem~\ref{expectation threshold for prinicpal upper set} and Lemma~\ref{intersection no-go lemma}:
    \begin{corollary}\label{upper bound dim cor}
        Suppose $\ell(\mathcal{F}_n)\rightarrow \infty$ and suppose that the Kahn-Kalai bounds provide nontrivial information.  Then, the intersection of all but any fixed number of the minimal elements of $\mathcal{F}_n$ must eventually be empty--i.e.; for any fixed $t$, it is necessary that there exists a value $N$ such that $\sigma_{|(\mathcal{F}_n)_0|-t}(S_i : S_i \in (\mathcal{F}_n)_0)= \emptyset$ for all $n \geq N$.
    \end{corollary}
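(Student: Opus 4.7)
The plan is to reduce the statement directly to the dimension-growth observation in Remark~\ref{dim gets big} combined with Theorem~\ref{dim inequality}. Under the hypotheses, Remark~\ref{dim gets big} (via Lemma~\ref{q estimate} and Remark~\ref{interesting case remark}) already tells us that $\textrm{dim}(\mathcal{F}_n) \to \infty$, and Corollary~\ref{min elem gets big} tells us that $|(\mathcal{F}_n)_0| \to \infty$ as well. Both of these facts are the substantive content; the remainder is a reindexing of Theorem~\ref{dim inequality}.

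Given a fixed positive integer $t$, I would apply Theorem~\ref{dim inequality} with $m = |(\mathcal{F}_n)_0| - t$. The theorem's hypothesis becomes
\[
\textrm{dim}(\mathcal{F}_n) > |(\mathcal{F}_n)_0| - m + 1 = t + 1,
\]
and its conclusion is exactly $\sigma_{|(\mathcal{F}_n)_0|-t}(S_i : S_i \in (\mathcal{F}_n)_0) = \emptyset$, which is what we want. So it suffices to show that $\textrm{dim}(\mathcal{F}_n) > t+1$ for all sufficiently large $n$, and that the index $m = |(\mathcal{F}_n)_0| - t$ lies in the admissible range $1 \leq m \leq |(\mathcal{F}_n)_0|$ for all sufficiently large $n$.

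Both of these are immediate from what has already been established: since $\textrm{dim}(\mathcal{F}_n) \to \infty$, there is some $N_1$ with $\textrm{dim}(\mathcal{F}_n) > t+1$ for $n \geq N_1$, and since $|(\mathcal{F}_n)_0| \to \infty$, there is some $N_2$ with $|(\mathcal{F}_n)_0| \geq t+1$ for $n \geq N_2$. Taking $N = \max\{N_1, N_2\}$ yields the conclusion for all $n \geq N$.

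The only ``obstacle'' here is bookkeeping --- one must be careful that the parameter $m$ used in Theorem~\ref{dim inequality} is a valid index (i.e. between $1$ and $|(\mathcal{F}_n)_0|$) for large $n$, which is why invoking Corollary~\ref{min elem gets big} alongside Remark~\ref{dim gets big} is necessary. Since the Corollary is essentially the contrapositive packaging of Theorem~\ref{dim inequality} under the growth assumption on the dimension, no further machinery is required, and I would not expect any deeper difficulty.
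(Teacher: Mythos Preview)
Your proposal is correct and follows essentially the same approach as the paper: both rest on the fact that $\textrm{dim}(\mathcal{F}_n)\to\infty$ from Remark~\ref{dim gets big} together with the dimension--symmetric-polynomial relationship. The only cosmetic difference is that you apply Theorem~\ref{dim inequality} directly with $m=|(\mathcal{F}_n)_0|-t$, whereas the paper argues the contrapositive through Corollary~\ref{dim formula}; your direct route is arguably a bit cleaner and has the added virtue of explicitly checking (via Corollary~\ref{min elem gets big}) that $m$ lands in the admissible range, a bookkeeping step the paper leaves implicit.
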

    \begin{proof}
        If $\sigma_{|(\mathcal{F}_n)_0|-t}(S_i : S_i \in (\mathcal{F}_n)_0) \neq \emptyset$, then $$\textrm{max}\left\{1 \leq m \leq |(\mathcal{F}_n)_0|: \sigma_{m}(S_i : S_i \in (\mathcal{F}_n)_0)\neq \emptyset\right\} \geq |(\mathcal{F}_n)_0|-t$$ and via Corollary~\ref{upper bound dim cor}, $\textrm{dim}(\mathcal{F}_n) \leq |(\mathcal{F}_n)_0|+1-|(\mathcal{F}_n)_0|+t=t+1$.  If there does not exist a value $N$ such that this does not occur for all $n \geq N$, then it is not possible to have $\textrm{dim}(\mathcal{F}_n) \rightarrow \infty$ as is required (see Remark~\ref{dim gets big}) for the Kahn-Kalai bounds to provide nontrivial information.
    \end{proof}
    Lemma~\ref{intersection no-go lemma} is just Corollary~\ref{upper bound dim cor} with $t=0$ (and the caveat that $\ell(\mathcal{F}_n)\rightarrow \infty$), so this is clearly a much stronger condition.  Restated in the form of Theorem~\ref{expectation threshold for prinicpal upper set} and Lemma~\ref{intersection no-go lemma}, it says that if subsets of the minimal elements \textit{excluding any fixed number of them} do not eventually have empty intersection, the Kahn-Kalai bounds do not provide perfect information.  Essentially, picturing $\mathcal{F}_n$ in the Hasse diagram of $2^{X_n}$, this says that the chunk it occupies must be `wedge-shaped' and never too skinny: the further $\mathcal{F}_n$ climbs up $2^{X_n}$ in one area, the further it must spread down and across $2^{X_n}$ in another. 
\section{Conclusion}
    The Park-Pham theorem (previously known as the Kahn-Kalai conjecture), bounds the critical probability, $p_c(\mathcal{F})$, of a non-trivial subset $\mathcal{F}\subseteq 2^X$ that is closed under supersets.  However, the bounds the result provides--i.e.; $p_c(\mathcal{F})\leq Kq(\mathcal{F})\log\ell(\mathcal{F})$, do not always give any more information than the trivial bound $p_c(\mathcal{F})<1$.  In particular, we have shown that in the interesting case where $\ell(\mathcal{F}_n)\rightarrow \infty$, there is a relatively strong requirement for us to use this tool to extract more information about $p_c(\mathcal{F})$.  Specifically, the minimal elements of $\mathcal{F}_n$ must eventually grow arbitrarily large in number and become quite dispersed:  For all but finitely many $n$, any collection of all but any fixed number of them must have nonempty intersection.
\bibliographystyle{amsplain}
\bibliography{references}
\end{document}